\newtheorem{theorem}{Theorem}
\newtheorem{lemma}{Lemma}
\newtheorem{proposition}{Proposition}
\newtheorem{algo}{Algorithm}
\theoremstyle{definition}
\newtheorem{remark}{Remark}
\newacro{PO}[PO]{Policy Optimization}
\newacro{pg}[PG]{Projected Gradient}
\newacro{ouralgo}[NOA]{Name of Our Algo}
\newacro{rcm}[RCM]{Riemannian Center of Mass}
\newacro{gcvx}[g-convex]{geodesically convex}
\newcommand{\R}{\mathbb{R}}
\newcommand{\A}{\mathcal{A}}
\newcommand{\tr}{\mathrm{tr}}
\newcommand{\RCM}{\text{RCM}}
\newcommand{\C}{\mathcal{C}}
\newcommand{\inj}{\mathrm{inj}}
\newcommand{\bbG}{G}
\newcommand{\calG}{\mathcal{G}}
\newcommand{\frakg}{\mathfrak{g}}
\newcommand{\Span}{\mathrm{span}}
\newcommand{\col}{\mathrm{col}}
\newcommand{\Log}{\mathrm{Log}}
\newcommand{\mat}[1]{\begin{bmatrix} #1 \end{bmatrix}}
\newcommand{\calM}{\mathcal{M}}
\def\BibTeX{{\rm B\kern-.05em{\sc i\kern-.025em b}\kern-.08em
		T\kern-.1667em\lower.7ex\hbox{E}\kern-.125emX}}
\def\x{{\bm x}}
\def\u{{\bm u}}
\def\z{{\bm z}}
\def\v{{\bm v}}
\def\w{{\bm w}}
\def\e{{\bm e}}
\def\1{{\bm 1}}
\def\0{{\bm 0}}
\def\Exp{\mathrm{Exp}}
\newcommand{\algorithmfootnote}[2][\footnotesize]{%
  \let\old@algocf@finish\@algocf@finish
  \def\@algocf@finish{\old@algocf@finish
    \leavevmode\rlap{\begin{minipage}{\linewidth}
    #1#2
    \end{minipage}}%
  }%
}
\title{Consensus on Lie groups for\\the Riemannian Center of Mass}
\author{Spencer Kraisler, \textit{Student Member, IEEE,} Shahriar Talebi, \textit{Student Member, IEEE,}\\ and Mehran Mesbahi, \textit{Fellow, IEEE} 

\thanks{The authors are with the William E. Boeing Department of Aeronautics and Astronautics, University of Washington, Seattle, WA, USA. Emails: {\em\small\{kraisler+shahriar+mesbahi\}@uw.edu}. The research of the authors has been supported by {NSF grant ECCS-2149470 and AFOSR grant FA9550-20-1-0053.}}
}
\begin{document}

\tikzstyle{block} = [draw, fill=blue!20, rectangle, 
    minimum height=3em, minimum width=6em]
\tikzstyle{sum} = [draw, fill=blue!20, circle, node distance=1cm]
\tikzstyle{input} = [coordinate]
\tikzstyle{output} = [coordinate]
\tikzstyle{pinstyle} = [pin edge={to-,thin,black}]

\maketitle

\begin{abstract}
In this paper, we develop a consensus algorithm for distributed computation of the \ac{rcm} on Lie Groups. The algorithm is built upon a distributed optimization reformulation that allows developing an intrinsic, distributed (without relying on a consensus subroutine), and a computationally efficient protocol for the \ac{rcm} computation. The novel idea for developing this fast distributed algorithm is to utilize a Riemannian version of distributed gradient flow combined with a gradient tracking technique. We first guarantee that, under certain conditions, the limit point of our algorithm is the \ac{rcm} point of interest. We then provide a proof of global convergence in the Euclidean setting, that can be viewed as a ``geometric'' dynamic consensus that converges to the average from arbitrary initial points. Finally, we proceed to showcase the superior convergence properties of the proposed approach as compared with other classes of consensus optimization-based algorithms for the \ac{rcm} computation.
\end{abstract}
\begin{IEEEkeywords}
	    \textit{Consensus on Lie Groups, Riemannian Center of Mass, Karcher Mean, Multiagent Systems}
\end{IEEEkeywords}

\section{Introduction}
Consensus algorithms are a ubiquitous class of protocols with pertinent applications to fields such as distributed estimation, optimization, and control of multi-agent systems. At a foundational level, consensus algorithms steer a set of dynamical agents towards a single point. \textit{Average} consensus necessitates this point to be the average of the agents' initial states. Statistical and computational advantages of average consensus algorithms have found a wide range of applications in distributed resource allocation, formation control, and distributed estimation \cite{olfati2007consensus,strogatz2012sync,kia2019tutorial}. 

While consensus algorithms have predominantly been studied for Euclidean spaces, there has been a number of efforts in the literature to generalize this protocol to Riemannian manifolds \cite{tron2012riemannian,olfati2006swarms,sarlette2010coordinated,sepulchre2010consensus,Chen2014-ds,Chen2014-ft,kraisler2023distributed}. One notable application is 3D localization of camera sensors \cite{tron2014distributed}. In this direction, useful notions from Euclidean geometry pertinent to the notion of average consensus generalize to Riemannian manifolds. This includes the \acf{rcm}, replicating the notion of an average \cite{afsari2011riemannian}.



The Riemannian consensus algorithms discussed so far achieve consensus but not \textit{consensus to the \ac{rcm}} (or the so-called ``\ac{rcm} consensus''). To the best of our knowledge, \cite[\S 3.2]{tron2008distributed} is the earliest work on the topic of \ac{rcm} consensus. The method was later extended to a larger regime of Riemannian manifolds in \cite{tron2011average}. In order to guarantee \ac{rcm} consensus, this approach utilizes a consensus subroutine within each iteration. However, practical implementation of this subroutine may not be always favorable and time-efficient. 
This calls for a fast distributed \ac{rcm} consensus algorithm that is,
\begin{enumerate*}
    \item intrinsic, and as such, parameterization of the manifold does not affect its properties;
    \item completely distributed without relying a consensus subroutine; and
    \item has at least a linear convergence rate.
\end{enumerate*} 

In this paper, we first reformulate the \ac{rcm} for a set of points on a Riemannian manifold as the unique optimal solution to a consensus optimization problem. This perspective calls to explore an array of techniques including distributed optimization on Riemannian manifolds and dynamic average consensus. In this direction, we also provide an optimizer for Lie groups equipped with a bi-invariant metric with the aforementioned properties, thus achieving a distributed \ac{rcm} consensus algorithm. We prove that, under certain conditions, the limit point of our algorithm is the \ac{rcm} of the original points. We provide global convergence guarantees in the Euclidean case and compare the performance of our proposed algorithm with distributed constrained optimization approaches such as penalty-based and Lagrangian-based methods.

The rest of the paper is organized as follows. \S\ref{sect:background} offers the problem formulation and a technical background on Riemannian geometry, \ac{rcm} and Lie groups. In \S\ref{sect:contribution}, we present the RCM optimization reformulation utilized to develop fast and simple distributed \ac{rcm} consensus. We then proceed with limit point analysis and global convergence guarantees of the proposed algorithm for the Euclidean space in \S\ref{sect:results}. Finally, several simulation scenarios are presented in \S\ref{sec:simulation}, followed by concluding remarks in \S\ref{sec:conclusion}.




\section{Problem Statement and Background}\label{sect:background}

Before delving deeper into necessary mathematical background, let us first present the problem statement. Consider a Lie group $\calG$ equipped with a bi-invariant Riemannian metric and a communication network of agents $[N] = \{1,2,\ldots, N\}$ represented by an undirected connected graph $\bbG = ([N],E)$, were $E$ represents the edge set. Given $N$ points $z_i \in \calG$, the goal is to design a distributed (dynamic) protocol (with respect to $\bbG$) for each agent as $\dot{x}_i(t) = \mathbf{F}_i(t,x_i, x_j:j \sim i)$, steering the agents' states to the \ac{rcm} of $\{z_1,...,z_N\}$. 

\subsection{Riemannian Geometry}
For geometric concepts, we follow the standard notation as in \cite{lee2018introduction}. Let $(\calM,\langle .,. \rangle)$ be a Riemannian manifold and $\mathrm{d}(.,.)$ the corresponding induced geodesic distance. In this paper, we assume that $\calM$ has  a bounded sectional curvature. We denote its N-fold Cartesian product as $\calM^N$ and use boldface letter $\x=(x_1,\cdots,x_N) \in \calM^N$ to denote a point on the product space. 

We denote the open \textit{geodesic ball} centered at $x \in \calM$ with radius $r > 0$ as,
\begin{equation*}
    \mathcal{B}(x, r) := \{y \in \calM: \mathrm{d}(x,y) < r\}.
\end{equation*}
A subset $A \subset \calM$ is called \acf{gcvx} if for any $x,y \in A$, there exists a unique \textit{minimizing} geodesic $\gamma:[0,T] \to \calM$ connecting $x$ to $y$ such that $\gamma$ is contained in $A$ and there exists no other \textit{minimizing} geodesic connecting the pair.\footnote{Some authors call this \textit{strong} geodesic convexity.} A function $f:A \to \R$ is called \ac{gcvx} if for any geodesic $\gamma$ contained in $A$, $f \circ \gamma$ is convex in the Euclidean sense \cite{boumal2020introduction}. The \textit{convexity radius} of $\calM$ is defined as
\begin{equation*}
    r^* := \frac{1}{2} \min(\inj( \calM), \frac{\pi}{\sqrt{\Delta}}),
\end{equation*} where $\inj (\calM)$ is the injectivity radius and $\Delta$ is the upper bound on the sectional curvature of $\calM$. If $r \leq r^*$, then $\mathcal{B}(x,r)$ is \ac{gcvx} \cite{afsari2011riemannian, tron2012riemannian}.

Next, for $\z=(z_1,\dots,z_N) \in \calM^N$, define its \textit{\acf{rcm}} as any minimizer of,
\begin{equation}\label{rcm-problem}
    \min_{y \in \calM}\; \sum_{i=1}^N \mathrm{d}(y,z_i)^2 ;
\end{equation} the \ac{rcm} may not exist nor be unique. We denote the \ac{rcm} of $\z$ as $\RCM(\z)$ or $\bar{z}$ when it exists and is unique. One can induce existence and uniqueness guarantees as follows. Define the \textit{convexity submanifold}:
\begin{equation}\label{convexity-submanifold}
    \C:=\{\z \in \calM^N: \exists y \in \calM,  r < r^* \ \text{s.t. } \  z_i \in \mathcal{B}(y,r) \ \forall i\}.
\end{equation} If $\z \in \C$, then $\RCM(\z)$ exists and is unique \cite[Thm. 2.1]{afsari2011riemannian}. 
Now, let $\mathcal{B}$ be any geodesic ball satisfying the condition in \cref{convexity-submanifold} for $\z$. If  some $z \in \mathcal{B}$ satisfies the \textit{Karcher equation},
\begin{equation}\label{eq:karcher}
    \sum_{i=1}^N \log_{z}(z_i)=0,
\end{equation} then we necessarily have $z = \RCM(\z)$. 

\subsection{Lie Groups}
A \textit{Lie group} $\calG$ is a smooth manifold with a group structure where the group and inverse operators are smooth mappings. Every (Lie) group admits the identity element $e$ such that $xe = ex = x$ and $x x^{-1} = x^{-1}x = e$ for all $ x \in \calG$. The tangent space of $\calG$ at the identity is called the \textit{Lie algebra} $\frakg := T_e\calG$. The Lie algebra is a vector space equipped with a vector multiplication operator called the commutator. We suggest \cite{arvanitogeorgos2003introduction,sachkov2009control} for introductions to Lie groups with emphasis on Riemannian geometry and control theory. 

A Riemannian metric $\langle .,. \rangle$ on $\calG$ is called \textit{left-invariant} if 
\begin{equation*}
    \langle \xi, \eta \rangle_e = \langle dL_x \xi , dL_x \eta \rangle_x,
\end{equation*} for any $x \in \calG$, $\xi, \eta \in \frakg$. Here, $dL_x$ is the differential of the left-translation map $L_x:y \mapsto xy$. Right-invariant metrics are defined similarly with the right-translation map $R_x:y \mapsto yx$. Bi-invariant metrics are both left- and right-invariant. If $\calG \subset GL(n)$ is a \textit{matrix Lie group}, then $dL_x \xi = x\xi$. 

All Lie groups admit left- and right-invariant metrics \cite[Lemma 3.10]{lee2018introduction}. A Lie group admits a bi-invariant metric if and only if it is isomorphic to the Cartesian product of a compact Lie group and a vector space \cite[Lemma 7.5]{milnor1976curvatures}. An example of a Lie group with a bi-invariant metric is the unit quaternions equipped with the dot product. 

The manifold and Lie group exponential coincide at the identity when the metric is bi-invariant \cite[Prop. 3.10.]{arvanitogeorgos2003introduction}. Since left-translation is an isometry, the following identities hold for these Lie groups \cite[Prop. 5.9]{lee2018introduction}: for any $x,y \in \calG$ and $\xi \in \frakg$,
\begin{subequations}
    \begin{align*}
        \exp_x(dL_x \xi) &= L_x(\Exp(\xi)), \\
        \log_x(y) &= dL_x\Log(x^{-1}y), 
    \end{align*}
\end{subequations} 
where $\Exp$ and $\exp_x$ are, respectively, the Lie and manifold exponential at $x$; $\Log$ and $\log_x$, on the other hand, are their corresponding inverse mappings (wherever they are well-defined). This also implies that on $T_x\calG$, we have
\begin{equation*}
    \nabla_x \left[\frac{1}{2}\mathrm{d}(x,y)^2\right] = -\log_{x}(y) = -dL_x \Log(x^{-1}y).
\end{equation*}


\section{The Algorithm and its Derivation}\label{sect:contribution}
In this section, we present a reformulation of \cref{rcm-problem} and propose a solution algorithm that is intrinsic, distributed, and has an (empirically) linear convergence rate. 

\subsection{\ac{rcm} Consensus Reformulation}
Using a consensus reformulation, we can reformulate \cref{rcm-problem} as a \ac{gcvx} consensus optimization problem,
\begin{subequations}\label{dist-rcm-problem}
    \begin{align}
        \min_{\x \in \calM^N} & \mathbf{f}(\x) := \sum_{i=1}^N \mathrm{d}(x_i,z_i)^2 \\
        \text{s.t. } & \; \x \in \A, \label{constraint}
    \end{align}
\end{subequations} where $\A=\{(x,...,x): x \in \calM\}$ is the \textit{agreement submanifold}. 
Also, note that on a connected graph, \cref{constraint} is equivalent to zero \textit{consensus error} $\varphi(\x) = 0$, defined as
\begin{equation}\label{consensus-error}
    \varphi(\x) := \frac{1}{2} \sum_{\{i,j\} \in E} \mathrm{d}(x_i,x_j)^2 = \frac{1}{4} \sum_{i=1}^N \sum_{j \sim i} \mathrm{d}(x_i,x_j)^2.
\end{equation}
Thus, the unique solution to \cref{dist-rcm-problem} on $\C$ (where its existence and uniqueness is guaranteed) corresponds to
$\x^* := (\bar{z},...,\bar{z})$.

\begin{figure*}[tp]
    \centering
    \begin{subfigure}{0.48\linewidth}
        \centering
        \includegraphics[trim={.7cm 0 .1cm 0},clip,width=.7\columnwidth]{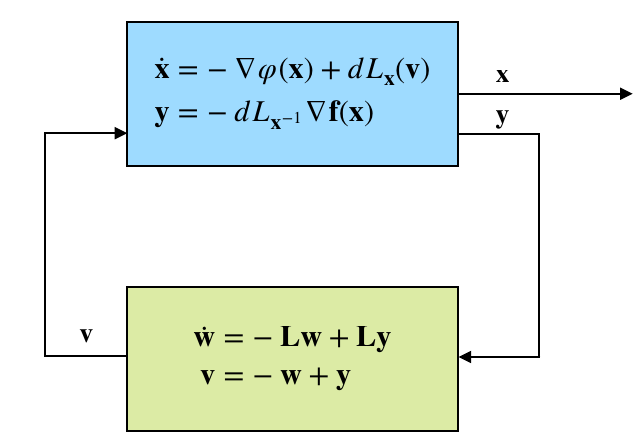}
        \vspace{0.7cm}
        \caption{}
        \label{fig:block-diagram}
    \end{subfigure}
    \hfill
    \begin{subfigure}{0.5\linewidth}
        \centering
        \includegraphics[width=\columnwidth]{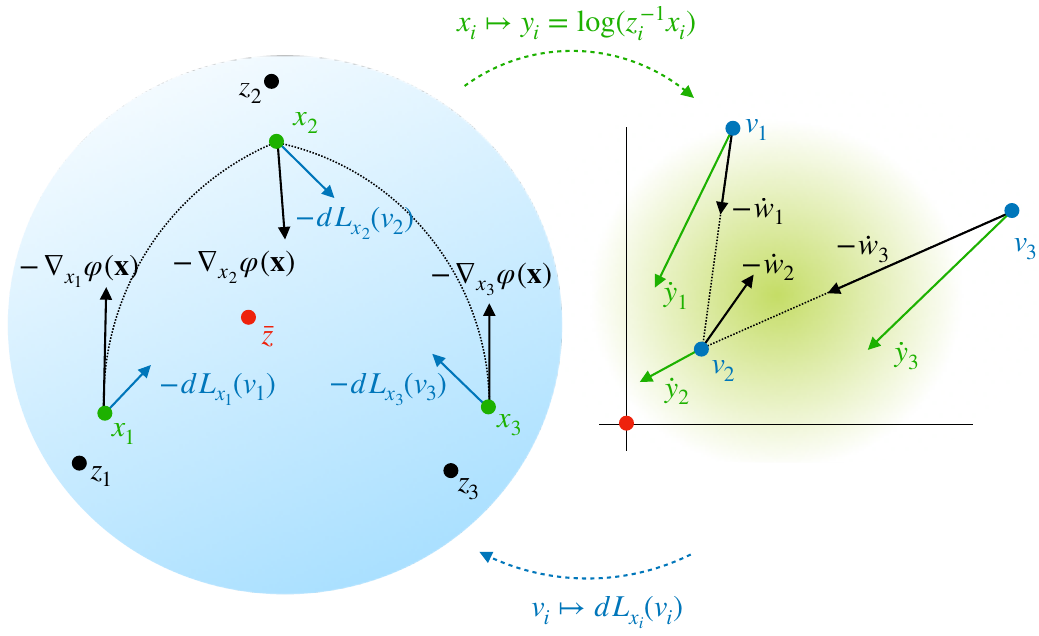}
        \caption{}
        \label{fig:Lie-correspondence}
    \end{subfigure}
    \caption{ (a) Block diagram of Algorithm \ref{algo:distributed-solver}. (b) Illustration of the correspondence in Algorithm \ref{algo:distributed-solver} between the state consensus on the Lie group (left) and the gradient dynamic consensus on its Lie algebra (right), interconnected by two mappings.}
    \vspace{-0.4cm}
\end{figure*}

This optimization reformulation enables us to design a distributed algorithm for \ac{rcm} consensus using \textit{local first order information}. That is, for agent $i$ we use,\footnote{These closed-form expressions only hold for Lie groups with bi-invariant metrics. Nonetheless, \cref{dist-rcm-problem} is still distributed for arbitrary Riemannian manifolds but \cref{closed-form-expressions} will be different.}
\begin{subequations}\label{closed-form-expressions}
    \begin{align}
        \nabla_{x_i} \mathbf{f}(\x) &= -dL_{x_i} \Log(x_i^{-1} z_i), \\
        \nabla_{x_i} \varphi(\x)  &=\textstyle -dL_{x_i}\sum_{j \sim i} \Log(x_i^{-1}x_j),
    \end{align}
\end{subequations} 
which depend only on the variables $\{x_i, x_j:j \sim i\}$ that are locally available. Therefore, a fast first order optimizer will be a solution to our problem.

\subsection{Our Algorithm}\label{sect:solver}

\begin{algo}\label{algo:distributed-solver} 
    Given $\z = (z_1,\cdots,z_N) \in \C \subset \calG^N$, let $\mathbf{L}:\frakg^N \to \frakg^N$ be the graph Laplacian of $\bbG$, represented as a linear operator on the product Lie algebra $\frakg^N$. For each agent $i$, set $x_i(0):=z_i \in \calG$ and $w_i(0):=0 \in \frakg$. The proposed algorithm assumes the following dynamics:
    \begin{subequations}\label{solver}
        \begin{align}
            \dot{\x} &= -\nabla \varphi(\x) - dL_{\x} \v \label{first-line}\\
            \dot{\w} &= \mathbf{L} \v,
        \end{align}
    \end{subequations}
    where $\v = -\w + dL_{\x^{-1}} \nabla \mathbf{f}(\x)$.
\end{algo} 

In order to show that Algorithm \ref{algo:distributed-solver} is fully distributed, notice that the dynamics for each agent $i$ reduces to
\begin{align*}
    \dot{x}_i &= 
    dL_{x_i}\big(\textstyle \sum_{j \sim i} \Log(x_i^{-1}x_j) - v_i \big), \\ 
    \dot{w}_i &= \textstyle \sum_{j \sim i} (v_i - v_j),
\end{align*} 
with $v_i = -w_i +  \Log(z_i^{-1}x_i)$.
Later, we show that each $v_i \in \frakg$ is tracking a global information about the distributed cost $\mathbf{f}$. Also, $w_i$ is a latent state for a dynamic consensus algorithm on the Lie algebra \cref{gradient-tracking}.

\subsection{Derivation of our Algorithm}
Similar to the gradient flow as the simplest (continuous) optimizer for an optimization problem, the Distributed Gradient Flow (DGF) can be viewed as the simplest optimizer for a \textit{distributed} optimization problem. The state dynamics for DGF assumes the form,
\begin{equation}\label{DGF}
    \dot{\x} = -\nabla \varphi(\x) - \nabla \mathbf{f}(\x).
\end{equation} Locally, each agent's state follows the dynamics 
\begin{equation*}
    \textstyle \dot{x}_i = dL_{x_i} \sum_{j \sim i} \Log(x_i^{-1}x_j) + dL_{x_i} \Log(x_i^{-1}z_i).
\end{equation*}
The first forcing term above drives the consensus error $\varphi$ to zero, whereas the second term attempts to minimize the \textit{local cost} $x \mapsto \frac{1}{2} \mathrm{d}(x,z_i)^2$. This is a first attempt at minimizing the cost $x \mapsto \mathbf{f}(x,...,x) = \sum_{i=1}^N \frac{1}{2} \mathrm{d}(x,z_i)^2$ in a distributed manner, whose global minimizer is $x=\bar{z}$. However, this method easily fails due to the excess of points that satisfy $\nabla \varphi(\x) = -\nabla \mathbf{f}(\x)$ with $\x \not \in \A$. 

One can think of DGF as an ``open loop'' distributed optimizer for \Cref{dist-rcm-problem}, i.e., while \Cref{DGF} includes global state feedback with the consensus term, there is no global \textit{cost gradient feedback}. To resolve this issue, each agent needs access to the global information about the average gradients of local costs of all agents. The simplest way to introduce this cost gradient feedback is through Gradient Tracking (GT) \cite{nedic2017achieving}. This involves implementing a dynamic consensus algorithm on the Lie algebra for tracking this average gradient:
\begin{subequations}\label{gradient-tracking}
    \begin{align}
        \dot{\mathbf{w}} &= -\mathbf{L}\mathbf{w} + \mathbf{L}\mathbf{u}, \\
        \mathbf{v} &= -\mathbf{w} + \mathbf{u}.
    \end{align}
\end{subequations} Here, $w_i(0)=0$ and we set $u_i = dL_{x_i^{-1}}\nabla_{x_i}\mathbf{f}(\x)$. Under these dynamics, each $v_i$ will track the same global information $\frac{1}{N}\sum_{j=1}^N dL_{x_j^{-1}}\nabla_{x_j} \mathbf{f}(\x)$. 

Finally, combining DGF with GT, which is achieved by replacing the term $-\nabla \mathbf{f}(\x)$ in \cref{DGF} with $-dL_{\x} \v$, gives us \Cref{solver}.  In other words, we are ``closing the loop'' on the DGF by introducing a cost gradient feedback for each agent using GT. A schematic diagram of this design is also depicted in \Cref{fig:block-diagram} with its geometric interpretation in \Cref{fig:Lie-correspondence}.

\begin{remark}
    We do no require $z_i$'s to be constant. Suppose each $z_i$ is time-varying and we wish for each $x_i$ to track the time-varying RCM in a distributed manner. Indeed, we have observed that our algorithm works empirically in this dynamic consensus setup. 

\end{remark}

\begin{remark}
    Note that we have initialized $\x(0)=z_i$. This can be relaxed to only requiring $\x(0) \in \C$. The ``closer'' the points are initialized to $\bar{z}$, the faster the convergence. However, since $\bar{z}$ is unknown a priori, we simply chose $\x_i(0)=z_i$; another candidate would be $\x_i(0) = e$. 
\end{remark}

\section{Main Results}\label{sect:results}
In this section, we first provide the limit point analysis of Algorithm \ref{algo:distributed-solver} in the general case and then guarantee convergence for the Euclidean case. 

\begin{proposition}[Limit point]\label{prop:stationary-point}
    Let $\mathcal{B} \subset \calG$ be a geodesic ball with radius $r < r^*$ containing $\{z_1,...,z_N\}$, and consider the trajectory $(\x(.),\w(.))$ generated by Algorithm \ref{algo:distributed-solver}. Suppose $(\x(.),\w(.))$ converges to some $(\x^*,\w^*)$ such that $\x^* \in \mathcal{B}^N$. Then $x_i^* = \bar{z}$ and $w_i^* = \Log(z_i^{-1} \bar{z})$ for each $i$.
\end{proposition}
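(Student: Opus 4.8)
The plan is to use the elementary fact that a convergent trajectory of an autonomous ODE must converge to an equilibrium, and then to read off the claim from the two equilibrium equations of \cref{solver} together with a conservation law. \emph{First}, since $\x^*\in\mathcal{B}^N$ with $\mathcal{B}$ of radius $r<r^*$, every logarithm $\log_{x_i^*}(x_j^*)$ and $\log_{x_i^*}(z_i)$ entering \cref{closed-form-expressions} is well defined, so the right-hand side of \cref{solver} is a smooth vector field on a neighborhood of $(\x^*,\w^*)$. Hence $\dot\x(t),\dot\w(t)$ converge (by continuity) to its value at $(\x^*,\w^*)$, and since a convergent curve cannot have velocity tending to a nonzero vector, that value is $0$. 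Writing $\v^*=-\w^*+dL_{(\x^*)^{-1}}\nabla\mathbf{f}(\x^*)$, this gives $\mathbf{L}\v^*=0$ and $\nabla\varphi(\x^*)+dL_{\x^*}\v^*=0$. \emph{Separately}, summing the per-agent update $\dot w_i=\sum_{j\sim i}(v_i-v_j)$ over $i$ and using that $\bbG$ is undirected shows $\sum_i w_i(t)$ is conserved, so $\sum_i w_i^*=0$ because $w_i(0)=0$.

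\emph{Next}, from $\mathbf{L}\v^*=0$ and connectivity of $\bbG$ (so $\ker\mathbf{L}$ on $\frakg^N$ is the agreement subspace) I get $v_i^*=\xi$ for a common $\xi\in\frakg$. Pushing the $i$-th component of $\nabla\varphi(\x^*)+dL_{\x^*}\v^*=0$ through $dL_{(x_i^*)^{-1}}$ and using $\nabla_{x_i}\varphi(\x)=-dL_{x_i}\sum_{j\sim i}\Log(x_i^{-1}x_j)$ turns it into $\sum_{j\sim i}\Log\big((x_i^*)^{-1}x_j^*\big)=\xi$ for each $i$; summing over $i$ and invoking the antisymmetry $\Log\big((x_j^*)^{-1}x_i^*\big)=-\Log\big((x_i^*)^{-1}x_j^*\big)$ --- legitimate since $\mathrm{d}(x_i^*,x_j^*)<2r^*\le\inj(\calG)$ keeps $x_i^{-1}x_j$ in the injectivity neighborhood of $e$ --- makes the edge terms cancel pairwise, so $N\xi=0$ and $\v^*=0$; in particular $\nabla\varphi(\x^*)=0$. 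Because $\varphi$ is \ac{gcvx} on the \ac{gcvx} set $\mathcal{B}^N$ (as $r<r^*$), a critical point of $\varphi$ there is a global minimizer, and for connected $\bbG$ these are exactly the agreement configurations where the consensus error \cref{consensus-error} vanishes; hence $x_1^*=\cdots=x_N^*=:x^*\in\mathcal{B}$. Using $dL_{x_i^{-1}}\nabla_{x_i}\mathbf{f}(\x)=-\Log(x_i^{-1}z_i)=\Log(z_i^{-1}x_i)$, the relation $v_i^*=\xi=0$ now reads $w_i^*=\Log(z_i^{-1}x^*)$, so it only remains to identify $x^*$. Summing $w_i^*=\Log(z_i^{-1}x^*)$ over $i$, using $\sum_i w_i^*=0$, and applying $-dL_{x^*}$ together with $-\Log(z_i^{-1}x^*)=\Log((x^*)^{-1}z_i)$ gives $\sum_i\log_{x^*}(z_i)=0$, i.e. the Karcher equation \cref{eq:karcher} holds at $x^*\in\mathcal{B}$; since $\mathcal{B}$ satisfies the condition in \cref{convexity-submanifold} for $\z$, the uniqueness statement following \cref{eq:karcher} forces $x^*=\RCM(\z)=\bar z$, which also yields $w_i^*=\Log(z_i^{-1}\bar z)$.

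The step I expect to be the main obstacle is making the reduction above rigorous --- turning convergence of the trajectory into an equilibrium condition, which is precisely why the hypothesis $\x^*\in\mathcal{B}^N$ is imposed (it guarantees the vector field is defined and continuous at the limit) --- together with the appeal, in the consensus step, to the fact that inside the \ac{gcvx} ball $\mathcal{B}^N$ the consensus potential $\varphi$ has no critical configurations other than the consensual ones; alternatively one can argue this directly by noting that at the smallest enclosing ball of $\{x_i^*\}$ an agent lying on the boundary sphere must coincide with all of its neighbors (strict convexity of the squared distance to the center), and connectivity then propagates equality to all agents. Everything else is bookkeeping with the bi-invariance identities of \cref{closed-form-expressions} and the $\sum_i w_i$ conservation law.
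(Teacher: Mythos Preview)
Your argument follows essentially the same route as the paper's: derive the equilibrium conditions, use $\ker\mathbf{L}=\Span(\1)$ to get $v_i^*=\xi$, exploit the antisymmetry $\Log((x_i^*)^{-1}x_j^*)=-\Log((x_j^*)^{-1}x_i^*)$ to kill $\xi$ (the paper packages this as the inner-product computation $N\|\xi\|^2=\langle dL_{(\x^*)^{-1}}\nabla\varphi(\x^*),\xi\1\rangle=0$, but the content is identical), conclude $\nabla\varphi(\x^*)=0$, use the conservation law $\sum_i w_i\equiv 0$, and finish with the Karcher equation. You are also more explicit than the paper about why a convergent trajectory must land on an equilibrium.

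The one step you should tighten is the passage from $\nabla\varphi(\x^*)=0$ to $\x^*\in\A$. Your claim that ``$\varphi$ is \ac{gcvx} on $\mathcal{B}^N$'' is not true in general: on a manifold with positive sectional curvature (e.g.\ $SO(3)$ with the bi-invariant metric), the squared distance $\mathrm{d}(x,y)^2$ is \emph{not} jointly \ac{gcvx} in $(x,y)$, so $\varphi$ need not be \ac{gcvx} on $\mathcal{B}^N$. The paper handles this step by invoking \cite[Theorem~5]{tron2012riemannian}, which shows directly that the critical points of $\varphi$ on $\C$ are exactly the consensus configurations. Your alternative smallest-enclosing-ball sketch is in the right spirit (and close to how Tron et al.\ argue), but as written it needs more care: you must argue that for an agent $x_i^*$ on the boundary sphere, the critical-point equation $\sum_{j\sim i}\log_{x_i^*}(x_j^*)=0$ together with all $x_j^*$ lying in the closed ball forces every neighbor to coincide with $x_i^*$; this uses that $\log_{x_i^*}(x_j^*)$ has nonnegative inner product with the inward normal, with equality only when $x_j^*=x_i^*$. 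Either cite the Tron result or flesh out this geometric step.
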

\begin{proof}
    We can compactly express Algorithm \ref{algo:distributed-solver} as
    \begin{equation*}
        \mat{\dot{\x} \\ \dot{\w}} = \mat{-\nabla \varphi(\x) - dL_\x [-\w + \Log(\z^{-1}\x)\\ \mathbf{L}[-\w + \Log(\z^{-1}\x)] }.
    \end{equation*}
    Since we assume $(\x^*,\w^*)$ is a fixed point,
    \begin{equation*}
        \mathbf{L}\left[-\w^* + \Log(\z^{-1}\x^*)\right]=0,
    \end{equation*} and hence,
    \begin{equation*}
        -\w^* + \Log(\z^{-1}\x^*) = (\xi,...,\xi) =: \xi \1 \in \frakg^N ,
    \end{equation*} for some $\xi \in \frakg$. Hence, $dL_{(\x^*)^{-1}} \nabla \varphi(\x^*) = \xi \1$. Then
    \begin{equation*}
          N \|\xi\|^2 = \langle \xi \1, \xi \1 \rangle = \langle dL_{(\x^*)^{-1}} \nabla \varphi(\x^*), \xi \1 \rangle .
    \end{equation*} 
    Since the metric is bi-invariant, we have
    \begin{gather*}
        dL_{(x_i^*)^{-1}} \Log_{x_i^*}(x_j^*)  = \Log((x_i^*)^{-1}x_j^*) =-dL_{(x_j^*)^{-1}} \Log_{x_j^*}(x_i^*) 
    \end{gather*} implying
    $\textstyle \sum_{i=1}^N dL_{(x_i^*)^{-1} }\sum_{j \sim i} \log_{x_i^*}(x_j^*)=0$. Therefore
    \begin{gather*}
        \langle dL_{(\x^*)^{-1}} \nabla \varphi(\x^*), \xi \1 \rangle = \left\langle \sum_{i=1}^N \sum_{j \sim i} \Log((x_i^*)^{-1}x_j^*), \xi \right\rangle =0.
    \end{gather*} Thus we must have $\xi = 0$, and therefore $\nabla \varphi(\x^*)=0$.

    Next, it is shown in \cite[Theorem 5]{tron2012riemannian} that the critical points of $\varphi$ restricted to $\C$ coincide with $\A$. So, $\x^* \in \C$ and $\nabla \varphi(\x^*)=0$ together imply $\x^* \in \A$. Next, note that
    \begin{equation*}
        \textstyle\frac{d}{dt} \sum_{i=1}^N\dot{w}_i(t) = 0.
    \end{equation*} 
    As such, since $\w(0) = 0$, we must have $\sum_{i=1}^N w_i^* = 0$. Therefore, by the fact that $-\w^* + \Log(\z^{-1}\x^*) = 0$, we arrive at
    \begin{equation*}
        \textstyle\sum_{i=1}^N \Log(z_i^{-1}x^*)=0.
    \end{equation*} Since $x^*,z_1,...,z_N \in \mathcal{B}$ and $x^*$ satisfies \Cref{eq:karcher}, $x^* = \bar{z}$, and thus $x_i^* = \bar{z}$ and $w_i^* = \Log(z_i^{-1}\bar{z})$. 
\end{proof}

Next, we provide the convergence guarantees of our algorithm for $\calG=\R^n$; the corresponding analysis for arbitrary Lie groups is the subject of our future work.
Herein, we write $\x = \col(x_1,...,x_N) \in \R^{Nn}$ to denote vertical concatenation and use similar notation for $\z$, $\x$, and $\u$. Then the dynamics of Algorithm \ref{algo:distributed-solver} reduces to the following matrix form:
\begin{subequations}\label{euclidean-solver}
    \begin{align}
        \dot{\x} &= -(\mathbf{L}\otimes I_n) \x + \w - (\x - \z) \\
        \dot{\w} &= -(\mathbf{L}\otimes I_n)\w + (\mathbf{L}\otimes I_n)(\x - \z),
    \end{align} 
\end{subequations}
where $\otimes$ denote the Kronecker product. The \ac{rcm} of initial states $\z$ reduces to the Euclidean average
\begin{equation*}
    \textstyle \bar{z} = \RCM(\z)=\frac{1}{N}\sum_{i=1}^N z_i \in \R^n,
\end{equation*}
and so $\x^* = \col(\bar{z},...,\bar{z})=\1_N \otimes \bar{z}$. The next result establishes guaranteed average consensus  starting from arbitrary initial points.

\begin{theorem}[Convergence in $\R^n$]\label{thm:conv-linear}
    Suppose $\x(t),\w(t)$ is the trajectory generated by \cref{euclidean-solver} over a connected graph $\bbG$ with $\w(0)=0$. Then, starting from any arbitrary starting point $\x(0)$, we have an exponentially convergent trajectory with $\lim_{t \to \infty} x_i(t) = \bar{z}$ and $\lim_{t \to \infty} w_i(t) = \bar{z} - z_i$ for all $i$.
\end{theorem}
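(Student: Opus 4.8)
The plan is to read \cref{euclidean-solver} as an affine linear time-invariant system and analyze it through the spectral decomposition of the graph Laplacian. Stacking $\xi := \col(\x,\w) \in \R^{2Nn}$, the dynamics become $\dot{\xi} = \mathbf{A}\,\xi + \mathbf{b}$ with
\begin{equation*}
    \mathbf{A} = \mat{-(\mathbf{L}\otimes I_n) - I_{Nn} & I_{Nn} \\ \mathbf{L}\otimes I_n & -(\mathbf{L}\otimes I_n)}, \qquad \mathbf{b} = \mat{\z \\ -(\mathbf{L}\otimes I_n)\z}.
\end{equation*}
The first thing I would record is the conserved quantity already noted in \Cref{prop:stationary-point}: since $\1_N^\top\mathbf{L} = 0$, we have $\tfrac{d}{dt}(\1_N^\top\otimes I_n)\w = 0$, so $(\1_N^\top\otimes I_n)\w(t) \equiv (\1_N^\top\otimes I_n)\w(0) = 0$ under the hypothesis $\w(0)=0$. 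Consequently the trajectory stays, for \emph{all} $\x(0)$, in the $\mathbf{A}$-invariant linear subspace $V := \R^{Nn}\times\{\w\in\R^{Nn}:\ \textstyle\sum_{i}w_i = 0\}$ (one also checks $\mathbf{b}\in V$), so it suffices to analyze the restricted dynamics on $V$.

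Next I would diagonalize the Laplacian. On a connected graph $\mathbf{L}$ is symmetric positive semidefinite, so $\mathbf{L} = U\Lambda U^\top$ with $U = [\,u_1\ \cdots\ u_N\,]$ orthogonal, $u_1 = \1_N/\sqrt N$, and $\Lambda = \mathrm{diag}(\lambda_1,\dots,\lambda_N)$, $0 = \lambda_1 < \lambda_2 \le \cdots \le \lambda_N$. Passing to the modal coordinates $\tilde x_k := (u_k^\top\otimes I_n)\x$, $\tilde w_k := (u_k^\top\otimes I_n)\w$ decouples the system into $N$ independent subsystems, the $k$-th one forced by $\tilde z_k := (u_k^\top\otimes I_n)\z$ and governed (tensored with $I_n$) by
\begin{equation*}
    \mathbf{A}_k = \mat{-\lambda_k-1 & 1 \\ \lambda_k & -\lambda_k}.
\end{equation*}
On $V$ the first mode satisfies $\tilde w_1 \equiv 0$, so that block collapses to the scalar (times $I_n$) stable dynamics $\dot{\tilde x}_1 = -\tilde x_1 + \tilde z_1$, converging exponentially to $\tilde x_1^{*} = \tilde z_1 = \sqrt N\,\bar z$. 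For $k \ge 2$ I would apply the $2\times 2$ Routh--Hurwitz test: $\tr\mathbf{A}_k = -(2\lambda_k+1) < 0$ and $\det\mathbf{A}_k = \lambda_k(\lambda_k+1)-\lambda_k = \lambda_k^2 > 0$, so $\mathbf{A}_k$ is Hurwitz and that affine block converges exponentially to the unique solution of its equilibrium equation, namely $\tilde x_k^{*} = 0$, $\tilde w_k^{*} = -\tilde z_k$.

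It then remains to reassemble. Every mode converges exponentially, hence so does $\xi(t)$, to the limit read off from the modal equilibria in the original coordinates: $\tilde{\x}^{*}$ has only its first mode nonzero and equal to $\sqrt N\,\bar z$, so $\x^{*} = (u_1\otimes I_n)(\sqrt N\,\bar z) = \1_N\otimes\bar z$, i.e. $x_i(t)\to\bar z$; and $\tilde{\w}^{*}$ equals $-\tilde z_k$ on each mode $k\ge 2$ and $0$ on mode $1$, so $\w^{*} = -\big(I_{Nn}-(u_1u_1^\top\otimes I_n)\big)\z = \1_N\otimes\bar z - \z$, i.e. $w_i(t)\to\bar z - z_i$.

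The one point that requires care --- and the main obstacle --- is the zero eigenvalue of $\mathbf{A}$: the raw system is \emph{not} Hurwitz, since the mode $k=1$ contributes the undamped eigenvalue $0$ of $\mathbf{A}_1$, whose eigenvector has a nonzero $\w$-component along $\1_N$ and is thus transversal to $V$. One must argue that excising precisely this mode via the conserved quantity $\sum_i w_i$ leaves $\mathbf{A}|_V$ Hurwitz, after which the statement follows from the routine trace/determinant computation and the back-substitution above. (A quadratic Lyapunov function on $V$ would also work, but the modal decomposition is cleaner because $\mathbf{L}$ is symmetric.)
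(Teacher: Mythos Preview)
Your argument is correct, but the paper proves the theorem quite differently. Rather than keeping the $(\x,\w)$ coordinates and carrying an affine forcing term, the paper performs the change of variables $\v=-\w+\x-\z$, which turns \cref{euclidean-solver} into a \emph{homogeneous} linear system $[\dot\v;\dot\x]=(A\otimes I_n)[\v;\x]$ with $A=\begin{bmatrix}-\mathbf L-I_N & -\mathbf L\\ -I_N & -\mathbf L\end{bmatrix}$. It then proves a spectral lemma for the full $2N\times2N$ matrix $A$---zero is a simple eigenvalue and the rest lie in the open left half plane---via a direct nullspace computation and a contradiction argument based on the relation $-\lambda=(\lambda+\rho)^2$ for some Laplacian eigenvalue $\rho$; the limit is obtained by computing $\lim_{t\to\infty}e^{(A\otimes I_n)t}=(pq^\top)\otimes I_n$ from the right/left null vectors and applying it to the initial data. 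Your route instead block-diagonalizes through the Laplacian eigenbasis, reduces to $N$ independent $2\times2$ affine blocks, and disposes of the zero mode by restricting to the invariant subspace $V$ given by the conserved quantity. This is more elementary (only a trace/determinant check per block, no contradiction argument, no Jordan projector), and as a bonus it yields the exponential rate explicitly as $\min_k\{-\max\Re\,\sigma(\mathbf A_k)\}$ on $V$; the paper's change of variables, on the other hand, elegantly removes $\mathbf b$ and handles the zero eigenvalue without ever needing to excise a subspace, folding the initialization $\w(0)=0$ directly into the projector computation.
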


\begin{proof}
First, by noting that $\v = -\w + \x - \z$ we reformulate the system in \cref{euclidean-solver} as
    \begin{equation}
        \mat{\dot{\v} \\ \dot{\x}} =  (A \otimes I_n) \mat{\v \\ \x},
    \end{equation}
    where
    \begin{equation}\label{eq:A-dyn}
        A \coloneqq \mat{-\mathbf{L} - I_N & -\mathbf{L} \\ -I_N & -\mathbf{L}}.
    \end{equation}
    Before proceeding, we need the following results on characterizing the spectrum of $A$ with its proof deferred to the end of this section.
\begin{lemma}\label{lem:A-spec}
Suppose $\bbG$ is connected. Then zero is a simple eigenvalue of the matrix $A$ in (\ref{eq:A-dyn}). Furthermore, all non-zero eigenvalues of this matrix have negative real parts.
\end{lemma}
\noindent The spectral properties of $A$ established above then implies that the dynamics of \cref{eq:A-dyn} is marginally stable. 
Let $p,q$ be, respectively, the right and left eigenvectors of $A$ associated with $\lambda=0$ such that $p^\top q=1$. So, $p \in \mathcal{N}(A)$. Also, since 0 is a simple eigenvalue of $A^\top$, we get
\begin{equation*}
        \mathcal{N}(A^\top)=\Span \left(\mat{\1_N \\ -\1_N} \right).
\end{equation*}
Thus, we set $p = \mat{0_N \\ \1_N}$, $q = \frac{1}{N} \mat{-\1_N \\ \1_N}$. By properties of Kronecker product, we obtain that $A \otimes I_n$ is also marginally stable with a zero eigenvalue of algebraic and geometric multiplicity $n$. This eigenvalue has corresponding right eigenvectors $p \otimes \e_i$ and left eigenvectors $q \otimes \e_i$. Here, $\e_i \in \R^n$ is the $i$-th standard basis for $i=1,2,\cdots,n$. 

Next, by computing $e^{(A\otimes I_n)t}$ using the Jordan decomposition and taking the limit as $t \to \infty$ we obtain that (see \cite[Proposition 3.11]{mesbahi2010graph} for a similar computation)
\begin{equation*}
    \lim_{t\to \infty}e^{(A\otimes I_n)t} = \sum_{i=1}^n (p \otimes \e_i) (q \otimes \e_i)^\top = (p q^\top) \otimes I_n.
\end{equation*}
Therefore, \Cref{eq:A-dyn} converges as follows
\begin{equation*}
    \mat{\v^* \\ \x^*} \coloneqq \lim_{t \to \infty}\mat{\v(t) \\ \x(t)} = ((p q^\top) \otimes I_n) \mat{\v(0) \\ \x(0)},
\end{equation*}  
where $\w(0) = \0_{Nn}$ and $\x(0)$ is the arbitrary starting point. Thus, $\v(0) = -\w(0) + \x(0) - \z = \x(0) - \z$, and
\begin{subequations}
    \begin{align*}
        \mat{\v^* \\ \x^*} &= \frac{1}{N} \left( \mat{\0_{N\times N} \quad \0_{N\times N}\\ -\1_N \1_N^\top \quad \1_N \1_N^\top } \otimes I_n \right) \mat{\x(0) - \z \\ \x(0)} \\
        &=\mat{\0_{Nn} \\ \frac{1}{N} (\1_N \1_N^\top \otimes I_n) \z}= \mat{\0_{Nn} \\ \1_N \otimes \bar{z}}.
    \end{align*}
\end{subequations} 
Therefore $\x^* = \1_N \otimes \bar{z}$ which completes the proof.
\end{proof}

\begin{proof}[Proof of \Cref{lem:A-spec}]
Let $\mat{\v \\ \x} \in \mathcal{N}(A)$ be non-zero. Then
\begin{subequations}
    \begin{align}
        -\mathbf{L}\v - \v - \mathbf{L}\x &= 0 \label{1a},\\
        -\v - \mathbf{L}\x &= 0\label{1b}.
    \end{align} 
\end{subequations}
By combining the two, we get $\mathbf{L} \v = 0$ implying that $\v \in \Span(\1_N)$. It follows from \cref{1b} that $\mathbf{L} \x = -\v = \alpha \1_N$. Then $\1^\top \mathbf{L} \x = 0 = \alpha \1_N^\top\1_N$, and thus $\alpha = 0$. But this implies that $\v = 0$. Thus $\mathbf{L} \x = 0$, which implies that $\x \in \Span(\1_N)$. Therefore, we can conclude that
\begin{equation*}
    \mathcal{N}(A) = \left\{\mat{\0_N \\ \x}: \x \in \Span(\1_N)\right\}.
\end{equation*} 
Since the nullspace has dimension 1, it follows that 0 is a simple eigenvalue of $A$. 
Next, let $\mat{\v \\ \x}$ be an eigenvector of $A$ associated with a non-zero eigenvalue $\lambda$. For the sake of contradiction, suppose $\Re[\lambda] \geq 0$. Then 
\begin{align*}
\begin{cases}
    \lambda \v &= -\mathbf{L} \v - \v - \mathbf{L}\x \\
    \lambda \x &= -\v - \mathbf{L} \x
\end{cases}
\end{align*}
Note, if $\x = 0$, then $\v=0$. So, in order for $\mat{\v \\ \x}$ to be an eigenvector, we need $\x \not = 0$. By solving for $\x$, we obtain
\begin{equation}
    -\lambda \x = (\lambda I + \mathbf{L})^2 \x.
\end{equation} 
So, $(-\lambda, \x)$  is an eigenpair of the matrix $(\lambda I + \mathbf{L})^2$. The eigenvalues of $(\lambda I + \mathbf{L})^2$ are $\{(\lambda + \rho)^2: \rho \in \sigma(\mathbf{L})\}$. So, there exists some $\rho \in \sigma(\mathbf{L})$ such that $-\lambda = (\lambda + \rho)^2$. Let $\lambda = a + bi$ and note the hypothesis of contradiction is $a \geq 0$ and $a^2 + b^2 \neq 0$. By separating the real and imaginary parts, we obtain
\begin{align*}
    -a &= (a + \rho)^2 - b^2, \\
    -b &= 2(a+\rho)b.
\end{align*}
Suppose $b=0$, then the first equation and the hypothesis $a\geq 0$ implies that $a=0$. But, this implies $\lambda = 0$ which is a contradiction. Next, suppose $b \not = 0$, then the second equation implies $-1 = 2a + 2\rho$. Thus $a = \frac{-1-2\rho}{2}<0$, which is a contradiction with $a \geq 0$ as $\rho \geq 0$.
Therefore, if $\lambda \neq 0$ then we must have $a < 0$, and so all non-zero eigenvalues of $A$ have negative real part. This completes the proof.
\end{proof}

\section{Simulations}\label{sec:simulation}

In this section and for illustration purposes, we consider the special orthogonal Lie group $\calG=SO(3)$, with its parameterization of $3 \times 3$ rotation matrices. The associated Lie algebra, denoted $\mathfrak{so}(3)$, is the space of $3 \times 3$ skew-symmetric matrices. We equip $SO(3)$ with the following bi-invariant Riemannian metric: 
\begin{equation*}
    \langle u,v\rangle_x := \frac{1}{2}\tr(u^\top v) = \frac{1}{2}\tr(\xi^\top \eta),
\end{equation*} where $x \in SO(3)$, $u,v \in T_xSO(3)$, and $\xi, \eta \in \mathfrak{so}(3)$ with $\xi = x^{-1}u, \eta = x^{-1}v$. Then, the corresponding geodesic distance reduces to 
\begin{equation*}
    d(x,y)^2 = \|\Log(x^\top y)\|^2 =-\frac{1}{2} \tr\left[\Log (x^\top y)^2\right].
\end{equation*}

In the following, two scenarios are in order where we randomly initialize agents on $SO(3)$.

\subsection{Scenario 1 (Comparison of Consensus Algorithms)}
In this section, we run simulations comparing our solver to three other consensus algorithms. We randomly initialized $N=10$ agents $\x \in \C$ and run each algorithm with the same initial conditions in order to compare them. We choose the following two metrics for comparison: the \textit{consensus error} \cref{consensus-error}, and the \textit{\ac{rcm} Error} $\textstyle E_{\mathrm{RCM}}(\x) = \sum_{i=1}^N \mathrm{d}(x_i, \bar{z})^2$ which aggregates the error of each point $x_1,\ldots,x_N$ from the \ac{rcm} $\bar{z}$. Using these two metrics, we compare our solver against three other algorithms described in the following and illustrate the results in \Cref{fig:solvers}. For the technical details of these algorithm and their implementation see the extended version of this work \cite{Kraisler???} and the code \cite{Kraisler_Github}. 

\subsubsection{Algorithm \ref{algo:distributed-solver}}
To implement the continuous dynamics of our algorithm, we consider their forward-Euler discretization on each tangent space with step size $\epsilon=0.1$ and use the Lie group exponential mapping as our choice of retraction.

\subsubsection{Riemannian Consensus Algorithm}
As stated before, \cite{tron2012riemannian} was one of the first works to introduce a consensus algorithm for arbitrary Riemannian manifolds, referred to as ``R. Tron et. al'' in \Cref{fig:solvers}.

\subsubsection{Penalty method}
The penalty approach is a commonly used solver for constrained optimization problems. We implemented \cite[Algorithm 14.3.1]{conn2000trust}, while introducing a distributed implementation for it. In particular, we chose penalty parameter $\mu(s) = \frac{1}{\sqrt{s}}$, number of gradient descent iterations $S=50$, and step size $\epsilon = 0.1$.

\subsubsection{Lagrangian method}
A Lagrangian method is an approach to solving constrained optimization problems that involves finding saddle points of the Lagrangian function. We implemented the solver described in \cite[4.4.1]{bertsekas2014constrained}. 

It can be seen in \Cref{fig:solvers} that both error metrics for our algorithm are vanishing with linear rate. Also, note in \Cref{fig:solvers} that Riemannian Consensus Algorithm has a seemingly linear rate of decrease for the consensus error, similar to our algorithm but with faster rate. However, we emphasize that this algorithm does not converge to the \ac{rcm} and it only synchronizes the agents to a point. This is evident in \Cref{fig:solvers}, where the \ac{rcm} Error for this algorithm stops decreasing. Finally, as illustrated in \Cref{fig:solvers}, the Penalty and Lagrangian method have a sub-linear rate of decrease for both the consensus error and the \ac{rcm} error.


\begin{figure}[tp]
    \centering
    \includegraphics[trim={0 0 0 0.8cm},clip, width=0.9\columnwidth]{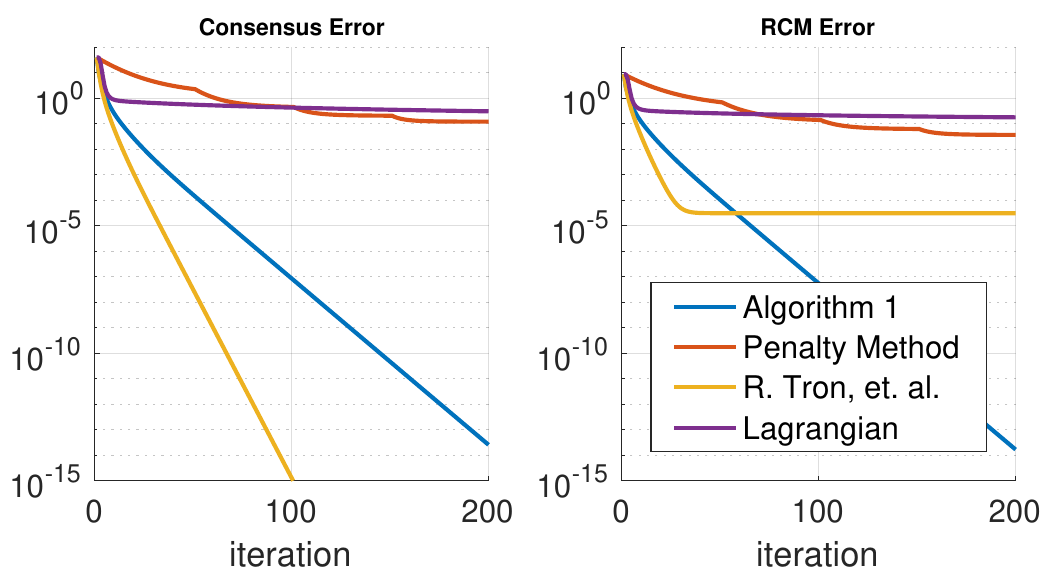}
    \caption{The consensus error (left) and the \ac{rcm} Error (right) at each iteration, comparing our algorithm (in blue) with three other algorithms.}
    \label{fig:solvers}
\end{figure}

\subsection{Scenario 2 (Linear rate of convergence)}
In this scenario,  we generate $100$ different problem instances with $N=10$ agents randomly initialized on $\C \subset SO(3)^{10}$. We run the algorithm on each instance for $200$ iterations for visualization of the converge rate. We illustrate the result in \Cref{fig:my_boxplot} showing the statistics of the RCM Error at each time step, confirming a linear rate. We note that although convergences analysis of the algorithm has only been provided for the Euclidean case, these numerical experiments--using different problem parameters--demonstrate the effectiveness of the proposed approach for distributed \ac{rcm} consensus on Lie groups.
\begin{figure}[tp]
    \centering
    \includegraphics[trim={0 0 0 1cm},clip,width=0.85\columnwidth]{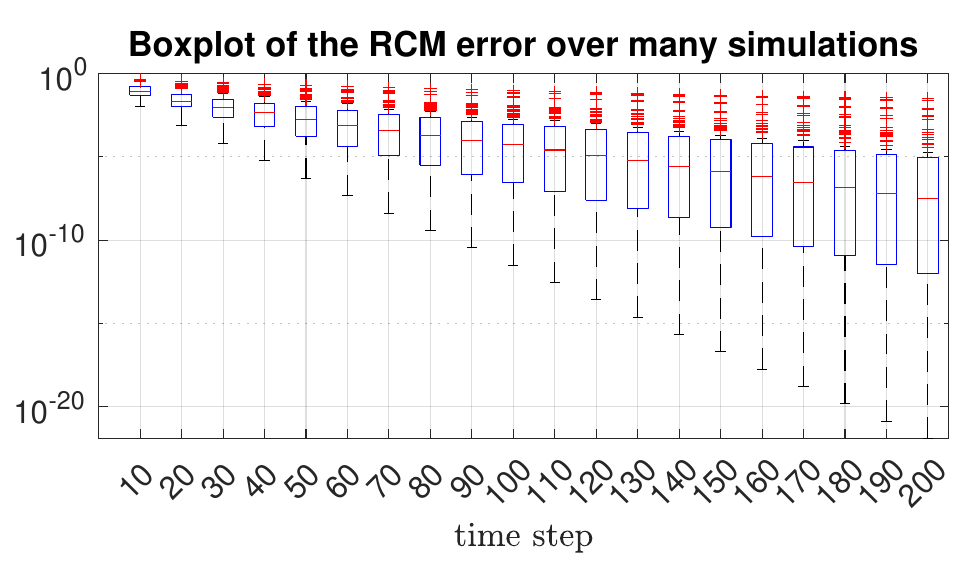}
    \caption{The RCM error of our proposed algorithm over $100$ randomly sampled problem instances.}
    \label{fig:my_boxplot}
    \vspace{-0.5cm}
\end{figure}

\section{Conclusions}\label{sec:conclusion}
In this work, we reformulate the \acf{rcm} as a distributed optimization problem and propose a novel and fast distributed solver on bi-invariant Lie groups. This in turn provides the first (completely) distributed solver for the \ac{rcm} consensus problem with no subroutines. The key idea behind the proposed algorithm is a correspondence between a consensus protocol on the Lie group and a dynamic consensus protocol on its Lie algebra. We have shown the properties of limit points of our algorithm in the general setting, and guaranteed its convergence in the Euclidean setting; the convergence analysis for the general case is the subject of our ongoing work.

\bibliographystyle{ieeetr}

\bibliography{citations}

\end{document}